\documentclass[12pt]{amsart}
\input {epsf}
\usepackage{latexsym, amsbsy, amsmath, amsfonts, amssymb, amsthm, amscd, amscd}
\usepackage[all]{xy}
\textwidth=17cm
\textheight=24cm
\oddsidemargin=-0.5cm
\evensidemargin=-0.5cm
\topmargin=-0.5cm

\newcommand{\R}{\mathbb R}

\newcommand{\Z}{\mathbb Z}

\newtheorem{Theorem}{Theorem}[section]

\newtheorem{Proposition}{Proposition}[section]

\allowdisplaybreaks[2]
\numberwithin{equation}{section}
\numberwithin{figure}{section}
\title{On the inverse braid and reflection monoids of type $B$}
\author[Vershinin]{V.~V.~Vershinin}%$^{*}$}
\address{D\'epartement des Sciences Math\'ematiques,
                                     Universit\'e Montpellier II,
Place Eug\`ene Bataillon,
34095 Montpellier cedex 5, France}
\email{ vershini@math.univ-montp2.fr}
\address{Sobolev Institute of Mathematics, Novosibirsk 630090,
Russia } 
\email{ versh@math.nsc.ru}
\subjclass[2000]{Primary 20F36; Secondary 20F38, 57M}
%\date{27.04.2003}
\keywords{Braid, inverse braid monoid, reflection group of type $B$,
presentation, reflection monoid}
\begin{document}
\begin{abstract}
There are well known relations between braid groups and symmetric groups,
between Artin-Briskorn braid groups and Coxeter groups.
Inverse braid monoid the same way is related to the inverse symmetric
monoid.
In the paper we show that similar relations exist between
the inverse braid monoid of type $B$ and the inverse reflection
monoid of type $B$. This gives a presentation of the last monoid.
\end{abstract}
\maketitle
\tableofcontents

\section{Introduction}

Let $V$ be a finite dimensional real vector space
($\operatorname{dim} V= n$) with Euclidean structure. Let $W$ be a
finite subgroup of $GL(V)$ generated by reflections. 
We suppose that $W$ is {\it essential}, i.e. that the set of fixed vectors
by the action of $W$ consists only of zero: $V^W=0$. 
Let
$\mathcal M$ be the set of hyperplanes such that $W$ is generated by
orthogonal reflections with respect to $M \in \mathcal M$. We suppose
that for every $w\in W$ and every hyperplane $M \in \mathcal M$ the
hyperplane $w(M)$ belongs to $\mathcal M$. 

Consider the complexification $V_C$ of the space $V$
and the complexification $M_C$ of $M\in\mathcal M$. Let
$Y_W=V_C-\bigcup_{M\in\mathcal M}M_C$. The group $W$ acts freely 
on $Y_W$. Let $X_W=Y_W/W$
then $Y_W$ is a covering over $X_W$ corresponding to the group
$W$.

This generalized braid  group  $Br(W)$ corresponding to the Coxeter group $W$ 
is defined as the
fundamental group of the space $X_W$ of regular orbits of the action of
$W$ and the corresponding pure braid group $P(W)$ is defined as the
fundamental group of the space $Y_W$.  So, for the generalized braid groups
is $Br(W)=\pi_1(X_W)$, $P(W)=\pi_1(Y_W)$.
The groups  $Br(W)$  were defined by 
E.~Brieskorn \cite{Bri1}, and are called also as  Artin--Brieskorn groups.
E.~Brieskorn \cite{Bri1} and P.~Deligne  \cite{Del} proved that the spaces $X_W$ and $Y_W$  are of the type $K(\pi, 1)$. 
 
The covering which corresponds to the action of $W$ on $Y_W$ gives rise to the
exact sequence 

\begin{equation*}
1 \rightarrow\pi_1( Y_W)\buildrel p_*
\over\rightarrow \pi_1(X_W) \rightarrow W
\rightarrow 1 .%\label{eq:pi1}
\end{equation*}

So, there is a naturally defined map $\rho: Br(W)\to W$.

Geometrical braid, as a system of $n$ curves in $\R^3$ lead to the notion
of partial braid where several among these $n$ curves can be omitted; partial braids form  the {\it inverse braid monoid }  $IB_n$  \cite{EL}. 
By definition a monoid is {\it inverse} if for any element $a$ of it 
there exists a unique element $b$ (which is called {\it inverse}) such that 
\begin{equation*}
a = aba
%\label{eq:reg_v_n}
\end{equation*}
 and 
\begin{equation*}
b = bab.
%\label{eq:inv}
\end{equation*}
This notion  was introduced by V.~V.~Wagner in 1952 \cite{Wag}. 
 See the books \cite{Pet} and \cite{Law} as general references for inverse semigroups.

The multiplication of partial braids is shown at Figure~\ref{fi:vv2}.
At the last stage it is necessary to remove any arc that does not join the upper 
or lower planes.

\begin{figure}
\epsfbox{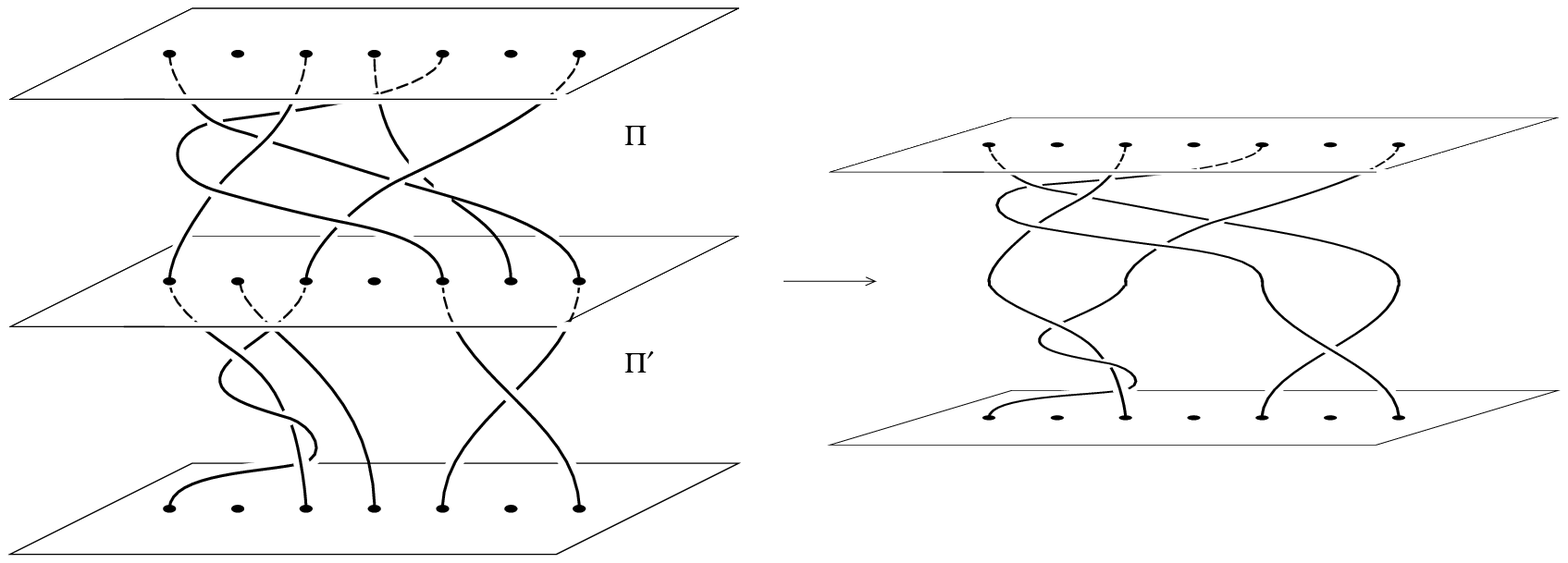}
\caption{} \label{fi:vv2}
\end{figure}

So, the classical braid group (which corresponds to  $W=\Sigma_n$, symmetric 
group) is included into the inverse braid monoid   $IB_n$.
 
The most important example of an inverse monoid is a monoid of partial (defined on a subset)
injections of a set into itself. For a finite set this gives us the notion of a
{\it symmetric inverse monoid } $I_n$ which generalizes and includes the classical
symmetric group $\Sigma_n$. A presentation of symmetric inverse monoid was
obtained by L.~M.~Popova \cite{Po}, see also formulas 
(\ref{eq:brelations}-\ref{eq:syminvrelations})
below.

Now let $W$ be the Coxeter group of type $B_n$. The corresponding 
inverse braid monoid $IB(B_n)$ was studied in \cite{Ve11} and the 
reflection monoid $I(B_n)$ in \cite{EF}.

The aim of the present paper is to show that in the case of type $B$ the 
situation is quite similar: there exists a map
$\rho_B: IB(B_n)\to I(B_n)$ such that the following diagram 
\begin{equation}
\CD
Br(B_n)@>>>W(B_n)\\
\downarrow&&\downarrow\\
IB(B_n)@>\rho_B>> I(B_n) 
\endCD
\label{eq:cod}
\end{equation}
(where the vertical arrows mean inclusion of the group of invertible elements
into a monoid) is commutative.

\section{Inverse braid monoid and type $B$\label{sec:typeb}}

Let $N$ be a finite set of cardinality $n$, say $N= \{v_1, \dots, v_n\}$.
The inverse symmetric monoid $I_n$ can be interpreted as a
monoid of partial monomorphisms of $N$ into itself. Let us equip elements of
$N$ with the signs, i.e. let $SN = \{\delta_1 v_1, \dots, \delta_n v_n\}$,
where $\delta_i =\pm 1$. The Weyl group $W(B_n)$ of type $B$ can be 
interpreted as 
a group of signed permutations of the set $SN$: 
\begin{equation*}
W(B_n)=\{\sigma - \text{
bijection \ of } SN: (-x)\sigma  =-(x)\sigma \text{ for} \ x\in SN\}. 
\end{equation*}
The {\it monoid of partial signed permutations} $I(B_n)$ 
is defined as follows
%\begin{equation}
\begin{multline*}
I(B_n)=\{\sigma - \text{partial \
bijection \ of } SN: (-x)\sigma  =-(x)\sigma \text{ for} \ x\in SN \\
\text {and } x\in \operatorname{dom} \sigma \ \text {if \, and \, only \, if} \
-x\in \operatorname{dom} \sigma\}, 
\end{multline*}
%\end{equation}
where $\operatorname{dom} \sigma$ means a domain of definition of
the monomorphism $\sigma$.

We remind that a monoid $M$ is {\it factorisable} if $M= EG$ where 
$E $ is a set of idempotents of $M$ and $G$ is a subgroup of $M$. 
Evidently the monoid $I(B_n)$ is factorisable \cite{EF}
as every partial signed permutation can be extended to an element 
of the group of units of $I(B_n)$ i.e. a signed permutation with the 
domain equal to $SN$.

Usually  the braid group $Br_n$ 
is given by the following Artin presentation \cite{Art1}.
It has the generators $\sigma_i$, 
$i=1, ..., n-1$, and two types of relations: 
\begin{equation}
 \begin{cases} \sigma_i \sigma_j &=\sigma_j \, \sigma_i, \ \
\text{if} \ \ |i-j|
>1,
\\ \sigma_i \sigma_{i+1} \sigma_i &= \sigma_{i+1} \sigma_i \sigma_{i+1} \, .
\end{cases} \label{eq:brelations}
\end{equation}

The following presentation for the inverse braid monoid was obtained in
\cite{EL}. It has the generators $\sigma_i, \sigma_i^{-1} $, $i=1,\dots,n-1,$
$\epsilon$, and relations
\begin{equation}
 \begin{cases} 
&\sigma_i\sigma_i^{-1}=\sigma_i^{-1}\sigma_i =1, \ \text {for \ all} \ i, \\
&\epsilon \sigma_i  =\, \sigma_i \epsilon \ \ \text {for } i\geq 2,   \\ 
&\epsilon\sigma_1 \epsilon  = \sigma_{1} \epsilon \sigma_1 \epsilon = 
\epsilon\sigma_{1} \epsilon \sigma_1, \\
&\epsilon = \epsilon^2 = \epsilon \sigma_1^2= \sigma_1^2 \epsilon
\end{cases} \label{eq:invbrelations}
\end{equation}
and the braid relations (\ref{eq:brelations}).

Geometrically the generator $\epsilon$ means that the first string in the trivial braid is absent.
 
If we replace the first relation in (\ref{eq:invbrelations})
 by the following set of relations
\begin{equation}
\sigma_i^2 =1, \ \text {for \ all} \ i, \\
 \label{eq:syminvrelations}
\end{equation}
and delete the
superfluous relations 
$$\epsilon =  \epsilon \sigma_1^2= \sigma_1^2 \epsilon, $$
we get a presentation of the symmetric inverse monoid $I_n$ \cite{Po} . 
We also can simply add the relations (\ref{eq:syminvrelations})
if we do not worry about redundant relations.
We get a canonical map \cite{EL}
\begin{equation*}
\rho_n: IB_n\to I_n
 %\label{eq:tauIBn}
\end{equation*}
which is a natural extension of the corresponding map for the braid 
and symmetric groups.

More balanced relations for the inverse braid monoid were 
obtained in \cite{Gil}.
Let $\epsilon_i$ denote the trivial braid with $i$th string deleted,
formally:
\begin{equation*}
\begin{cases} \epsilon_1 &= \epsilon, \\
\epsilon_{i+1} &= \sigma_i^{\pm 1}\epsilon_{i}\sigma_i^{\pm 1}.  
\end{cases} \end{equation*} 
So, the generators are: $\sigma_i, \sigma_i^{-1} $, $i=1,\dots,n-1,$
$\epsilon_i$, $i=1,\dots,n$, and relations  are the following:

\begin{equation}
 \begin{cases} 
&\sigma_i\sigma_i^{-1}=\sigma_i^{-1}\sigma_i =1, \ \text {for all} \ i, \\
&\epsilon_j \sigma_i =\, \sigma_i \epsilon_j \ \ \text {for } \ j \not= i, i+1, \\ 
&\epsilon_i\sigma_i =  \sigma_{i} \epsilon_{i+1},  \\
&\epsilon_{i+1}\sigma_i =  \sigma_{i} \epsilon_{i},  \\
&\epsilon_i = \epsilon_i^2 , \\
& \epsilon_{i+1} \sigma_i^2= \sigma_i^2 \epsilon_{i+1} = \epsilon_{i+1}, \\
&\epsilon_i \epsilon_{i+1} \sigma_i = \sigma_{i} \epsilon_i \epsilon_{i+1}
=\epsilon_i\epsilon_{i+1},
\end{cases} \label{eq:invbrelations2}
\end{equation}
plus the braid relations (\ref{eq:brelations}).

Let $E F_n$ be a monoid of partial isomorphisms of a free group
$F_n$ defined as follows. Let $a$ be an element of the symmetric
inverse monoid $I_n$, $a\in I_n$, $J_k =\{j_1, \dots, j_k\}$ 
is the image of $a$, and elements $i_1, \dots, i_k$ belong to
domain of the definition of $a$. The monoid $E F_n$
consists of isomorphisms
$$<x_{i_1}, \dots, x_{i_k}> \, \to \ <x_{j_1}, \dots, x_{j_k}>$$
expressed by 
$$f_a :x_i\mapsto w_i^{-1} x_{a(i)}w_i, $$
if $i$ is among $i_1, \dots, i_k$ and not defined otherwise and 
$w_i$ is a word on $x_{j_1}, \dots, x_{j_k}$.
The composition of $f_a$ and $g_b$, $a, b\in I_n$, 
is defined for $x_i$ belonging to the domain of $a\circ b$.
We put $x_{j_m}=1$ in a word $w_i$ if $x_{j_m}$ does not belong
to the domain of definition of $g$.
If we put $w_i=1$ we get an inclusion of $ I_n$ into
$EF_n$. Sending each $f_a\in EF_n$ to $a\in  I_n$
we get a homomorphism $ EF_n \to  I_n$.
\begin{Proposition}
The canonical maps $I_n \to EF_n$ and $EF_n \to  I_n$
give the following splitting
\begin{equation*}
I_n \to EF_n \to  I_n.
\end{equation*}
\end{Proposition}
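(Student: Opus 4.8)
The proposition asserts, in effect, that the composite $I_n \to EF_n \to I_n$ is the identity map of $I_n$, so that $EF_n \to I_n$ is a split epimorphism of monoids with the canonical inclusion as a section. The plan is to verify three things in turn: that the inclusion $\iota\colon I_n\to EF_n$ (sending $a$ to $f_a$ with every conjugating word $w_i$ equal to $1$) is a monoid homomorphism, that the projection $\pi\colon EF_n\to I_n$ (sending $f_a$ to its underlying partial injection $a$) is a monoid homomorphism, and that $\pi\circ\iota=\mathrm{id}_{I_n}$.

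For the first point, take $a,b\in I_n$ and form $f_a\circ f_b$ according to the composition rule of $EF_n$. On a generator $x_i$ the map $f_b$ acts, when defined, by $x_i\mapsto x_{b(i)}$ with trivial conjugator, and then $f_a$ sends $x_{b(i)}$ to $x_{(ab)(i)}$ exactly when $b(i)\in\operatorname{dom} a$, i.e.\ precisely when $i\in\operatorname{dom}(a\circ b)$. Since every conjugating word in sight is trivial, the substitution ``$x_{j_m}=1$ when $x_{j_m}\notin\operatorname{dom}$'' has nothing to act on, so $f_a\circ f_b=f_{a\circ b}$ again with trivial conjugators; and $\iota$ clearly carries the identity of $I_n$ to the identity of $EF_n$. (Injectivity of $\iota$ is immediate, since $f_a$ records both $\operatorname{dom} a$ and the values of $a$, though injectivity is not actually needed for the statement.)

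For the second point, recall that by definition the composition $f_a\circ g_b$ in $EF_n$ is defined exactly on those $x_i$ with $i\in\operatorname{dom}(a\circ b)$ and sends such an $x_i$ to a conjugate of $x_{(a\circ b)(i)}$, possibly after setting some $x_{j_m}=1$ inside the conjugating word; that substitution modifies only the word $w_i$ and never the index $(a\circ b)(i)$. Hence the underlying partial injection of $f_a\circ g_b$ is exactly the composite $a\circ b$ in $I_n$, that is, $\pi(f_a\circ g_b)=\pi(f_a)\,\pi(g_b)$, and $\pi$ sends the identity isomorphism to $\mathrm{id}\in I_n$; so $\pi$ is a monoid homomorphism.

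Finally, $\pi\circ\iota$ takes $a\in I_n$ to $f_a$ and then back to the underlying partial injection of $f_a$, which is $a$ by construction, so $\pi\circ\iota=\mathrm{id}_{I_n}$, giving the asserted splitting. The only genuinely delicate part of the argument is the bookkeeping around the composition law of $EF_n$ — in particular checking that the $x_{j_m}=1$ substitutions and the restriction of domains interact so that the index of the image generator of a composite is indeed $(a\circ b)(i)$; once that is in hand, the two maps being homomorphisms and the composite being the identity are purely formal.
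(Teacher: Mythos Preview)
Your argument is correct: you verify that the inclusion $\iota$ and the projection $\pi$ are monoid homomorphisms and that $\pi\circ\iota=\mathrm{id}_{I_n}$, which is exactly what the splitting means. The paper itself gives no proof of this proposition (it is marked with a bare $\square$), so your write-up is simply the natural unpacking of what the author regards as immediate from the definitions; there is nothing to compare beyond that.
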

\hfill $\square$

We remind that the Artin-Brieskorn braid group of the type $B$ is 
isomorphic to the braid group of a punctured disc \cite{La}, \cite{Ve1}, \cite{Ve6}.
  With respect to the classical braid group it has an extra generator
$\tau$ and the relations of type $B$:
\begin{equation}
\begin{cases} \tau\sigma_1\tau\sigma_1 &= \sigma_1\tau\sigma_1\tau, \\ 
\tau\sigma_i &= \sigma_i\tau, \ \  \text{if} \ \ i>1,\\ 
\sigma_i \sigma_{i+1} \sigma_i &= \sigma_{i+1} \sigma_i \sigma_{i+1}, \\
\sigma_i \sigma_j &=\sigma_j \, \sigma_i, \ 
\text{if} \ \ |i-j|>1.
\end{cases} \label{eq:typbrel}
\end{equation}
The monoid $IBB_n$ of partial braids of the type $B$
 can be considered also as a submonoid of $IB_{n+1}$ consisting of
partial braids with the first string fixed. An interpretation  as
a monoid of isotopy classes of maps is possible as well. As usual consider a disc $D^2$ with given $n+1$  points. Denote the set of these points by $Q_{n+1}$.
Consider homeomorphisms
 of $D^2$ onto a copy of the same disc with the condition that
 the first point is always mapped into itself and among the other $n$
 points
only $k$ points,  $k \leq n$ (say $i_1, \dots, i_k$) are mapped
bijectively onto the $k$ points (say $j_1, \dots, j_k$) of the 
set $Q_{n+1}$ (without the first point) of second copy of $D^2$. 
The isotopy classes of such homeomorphisms form the monoid $IBB_n$.

\begin{Theorem}\cite{Ve11}  We get a presentation of the monoid 
${IB}(B_n)$ if we add to
the presentation of the braid group of type $B$ (\ref{eq:typbrel}) 
the generator
$\epsilon$ and the following relations
\begin{equation}
 \begin{cases} 
&\tau\tau^{-1}=\tau^{-1}\tau =1,  \\
&\sigma_i\sigma_i^{-1}=\sigma_i^{-1}\sigma_i =1, \ \text {for all} \ i, \\
&\epsilon \sigma_i  =\, \sigma_i \epsilon \ \ \text {for } i\geq 2,   \\ 
&\epsilon\sigma_1 \epsilon  = \sigma_{1} \epsilon \sigma_1 \epsilon = 
\epsilon\sigma_{1} \epsilon \sigma_1, \\
&\epsilon = \epsilon^2 = \epsilon \sigma_1^2= \sigma_1^2 \epsilon\\
&\epsilon\tau = \tau\epsilon = \epsilon.
\end{cases} \label{eq:IBB}
\end{equation} 
We get another presentation of the monoid 
${IB}(B_n)$ if we add to 
%(\ref{eq:invbrelations}) or 
the presentation (\ref{eq:invbrelations2})
of  $IB_n$ one generator $\tau$, the type $B$ relations
(\ref{eq:typbrel}) and the following relations
\begin{equation*}
 \begin{cases} 
&\tau\tau^{-1}=\tau^{-1}\tau =1,  \\
&\epsilon_1\tau = \tau\epsilon_1 = \epsilon_1.
\end{cases} %\label{eq:IBB}
\end{equation*}
It is a factorisable inverse monoid.
%\label{con}
\end{Theorem}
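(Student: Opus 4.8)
The plan is to establish the first presentation in full and then deduce the rest from it. Write $M$ for the monoid defined by (\ref{eq:typbrel}) together with (\ref{eq:IBB}), and use the description of $IB(B_n)$ as the submonoid of $IB_{n+1}$ of partial braids fixing the first string, under which $\tau\mapsto\sigma_1^2$, the generator $\sigma_i$ $(1\le i\le n-1)$ goes to $\sigma_{i+1}$, and $\epsilon$ goes to $\epsilon_2$, the trivial $(n+1)$-braid with its second string deleted. The arguments below run parallel to the type $A$ case of \cite{EL} (which itself follows Popova \cite{Po}); the one genuinely new ingredient is the handling of $\tau$, which is controlled entirely by the relation $\epsilon\tau=\tau\epsilon=\epsilon$.

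First I would verify that (\ref{eq:typbrel}) and (\ref{eq:IBB}) hold in $IB(B_n)$: under the substitution above each of these relations becomes an identity of $IB_{n+1}$ that is a consequence of (\ref{eq:brelations})--(\ref{eq:invbrelations}) (for example $\epsilon\tau=\tau\epsilon=\epsilon$ becomes $\epsilon_2\sigma_1^2=\sigma_1^2\epsilon_2=\epsilon_2$, obtained from $\epsilon=\epsilon\sigma_1^2=\sigma_1^2\epsilon$ and $\epsilon\sigma_1\epsilon=\sigma_1\epsilon\sigma_1\epsilon$ by conjugation). Hence there is a monoid homomorphism $\phi\colon M\to IB(B_n)$ sending generators to the corresponding partial braids. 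It is surjective: every partial braid of type $B$ can be completed to an invertible one, so $IB(B_n)=E\cdot Br(B_n)$, where $Br(B_n)$ is generated by $\tau,\sigma_1,\dots,\sigma_{n-1}$ and the semilattice $E$ of idempotents --- the partial identities on the subsets of the $n$ movable strings --- is generated by the conjugates of $\epsilon$.

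Next comes the core step, a normal form in $M$. Put $e_1=\epsilon$, $e_{j+1}=\sigma_je_j\sigma_j^{-1}$ for $1\le j\le n-1$, and $e_S=\prod_{j\in S}e_j$ for $S\subseteq\{1,\dots,n\}$. Using only the relations one shows, just as in \cite{EL}, that the $e_j$ are commuting idempotents and that each generator $g\in\{\tau^{\pm1},\sigma_1^{\pm1},\dots,\sigma_{n-1}^{\pm1}\}$ satisfies $g\,e_S=e_{S'}\,g$ for a suitable $S'$; the passage across $\sigma_1$ rests on $\epsilon\sigma_1\epsilon=\sigma_1\epsilon\sigma_1\epsilon=\epsilon\sigma_1\epsilon\sigma_1$, and the passage across $\tau$ on $\epsilon\tau=\tau\epsilon=\epsilon$ together with $\epsilon\sigma_i=\sigma_i\epsilon$ $(i\ge2)$. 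Therefore every element of $M$ can be rewritten as $e_S\,w$ with $w$ a word in $\tau^{\pm1},\sigma_i^{\pm1}$. Because (\ref{eq:typbrel}) is a presentation of $Br(B_n)$ (\cite{La}; see also \cite{Ve1}, \cite{Ve6}) and holds in $M$, there is a homomorphism $Br(B_n)\to M$ whose composite with $\phi$ is the standard inclusion $Br(B_n)\hookrightarrow IB(B_n)$; in particular the subgroup of $M$ generated by $\tau^{\pm1},\sigma_i^{\pm1}$ is a copy of $Br(B_n)$ on which $\phi$ is injective, and likewise the $e_S$ are pairwise distinct in $M$ since their $\phi$-images are. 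Thus $M=\langle e_1,\dots,e_n\rangle\cdot Br(B_n)$.

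It remains to see that $\phi$ is injective. From $\phi(e_Sw)=\phi(e_{S'}w')$ the inverse involution of $IB(B_n)$ gives $\phi(e_S)=\phi(e_Sw)\,\phi(e_Sw)^{-1}=\phi(e_{S'})$, hence $S=S'$ and $e_S=e_{S'}$ in $M$; cancelling $w'$ on the right then puts $w(w')^{-1}$ into the subgroup $H_S=\{u\in Br(B_n):\phi(e_S)\,u=\phi(e_S)\}$ of braids that become trivial once the strings indexed by $S$ are deleted. The one point demanding real work is to show that $e_S\,u=e_S$ already holds in $M$ for every $u\in H_S$; it suffices to check this on a generating set of $H_S$ --- the $\sigma_j$ with $j,j+1\in S$, the squares $\sigma_j^{2}$ with exactly one of $j,j+1$ in $S$, the element $\tau$ when $1\in S$, and the windings of the other strings of $S$ around the fixed string --- and each such identity is deduced from $\epsilon=\epsilon^2=\epsilon\sigma_1^2=\sigma_1^2\epsilon$, $\epsilon\sigma_1\epsilon=\sigma_1\epsilon\sigma_1\epsilon=\epsilon\sigma_1\epsilon\sigma_1$, $\epsilon\sigma_i=\sigma_i\epsilon$ $(i\ge2)$ and $\epsilon\tau=\tau\epsilon=\epsilon$ by conjugating and using the commutations of the $e_j$ already recorded; this is the type $B$ counterpart of the stabiliser computations in \cite{EL} and \cite{Po}, the only new feature being $\epsilon\tau=\tau\epsilon=\epsilon$ and its conjugates. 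Granting this, $e_Sw=e_{S'}w'$ in $M$, so $\phi$ is an isomorphism. The decomposition $M=\langle e_1,\dots,e_n\rangle\cdot Br(B_n)$ now exhibits $M$ as a factorisable monoid with group of units $Br(B_n)$, and it is inverse because $IB(B_n)$ is closed under the natural involution of the inverse monoid $IB_{n+1}$. Finally, the second presentation follows by Tietze transformations: adjoin $\epsilon_1,\dots,\epsilon_n$ to the first presentation through $\epsilon_1=\epsilon$, $\epsilon_{j+1}=\sigma_j\epsilon_j\sigma_j^{-1}$, eliminate $\epsilon$, and apply the moves of \cite{Gil} that turn (\ref{eq:invbrelations}) into (\ref{eq:invbrelations2}); the surviving type $B$ relation is exactly $\epsilon_1\tau=\tau\epsilon_1=\epsilon_1$.
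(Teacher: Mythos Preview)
The paper does not prove this theorem: it is quoted verbatim from \cite{Ve11} and carries no proof here, so there is nothing in the present text to compare your argument against.

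As to the plan itself, your outline is the natural adaptation of the Easdown--Lavers strategy in \cite{EL} to type $B$, and it is presumably close to what \cite{Ve11} actually does. The set-up (the embedding $IB(B_n)\subset IB_{n+1}$ via $\tau\mapsto\sigma_1^2$, verification of relations, surjectivity via factorisability, normal form $e_Sw$) is fine, and you correctly isolate $\epsilon\tau=\tau\epsilon=\epsilon$ as the one new relation governing the passage of idempotents across $\tau$. The genuine content lies where you say it does: showing $e_Su=e_S$ in $M$ for every $u$ in the stabiliser $H_S$. Your proposed generating set for $H_S$ is plausible but not justified; $H_S$ is the kernel of the ``forget the strings in $S$'' homomorphism from $Br(B_n)$ to a smaller type $B$ braid group, and producing an explicit generating set for such a kernel is a nontrivial step (in type $A$ it rests on the Fadell--Neuwirth fibrations underlying \cite{EL}). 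For type $B$ you would need either to cite the analogous fibration argument for punctured-disc braid groups or to reduce to the type $A$ computation inside $IB_{n+1}$ via your embedding. Until that generating set is pinned down, the injectivity step remains a sketch rather than a proof.
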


 We define an action of $ {IB}(B_n)$ on $SN$ by partial 
 isomorphisms  as follows
 \begin{equation}
 \sigma_i(\delta_j v_j) =\begin{cases} \delta_{i} v_{i+1}, \ \text{if } j=i,\\
  \delta_{i+1} v_i, \ \text{if } j=i+1,\\
  \delta_j v_j,  \ \text{if } j\not=i,i+1, \\
  \end{cases}
  \label{eq:sigmai}
 \end{equation}
 \begin{equation}
 \tau(\delta_j v_j) =\begin{cases} -\delta_{1} v_{1}, \ \text{if } j=1,\\
  \delta_j v_j,  \ \text{if } j\not=1,\\
  \end{cases}
 \end{equation} 
 \begin{equation}
 \operatorname{dom}\epsilon =\{\delta_2 v_2, \dots, \delta_n v_n\},
 \end{equation}  
 \begin{equation}
 \epsilon(\delta_j v_j) =  \delta_j v_j,  \ \text{if } j=2, \dots, n,\\
 \end{equation} 
  \begin{equation}
 \operatorname{dom}\epsilon_i =\{\delta_1 v_1, \dots, {\widehat{\delta_i v_i}}, 
 \dots, \delta_n v_n\},
 \end{equation}  
 \begin{equation}
 \epsilon_i(\delta_j v_j) =  \delta_j v_j,  \ \text{if } j=1, \dots, \widehat{i},
 \dots, n.\\
  \label{eq:epsiloni}
 \end{equation} 
 Direct checking shows that the relations of the inverse braid monoid of type 
 $B$ are satisfied by the compositions of partial isomorphisms defined by
  $\sigma_i$, $\tau$  and $\epsilon_i$.
\begin{Theorem}  The action given by the formulas (\ref{eq:sigmai} -  
\ref{eq:epsiloni}) defines a homomorphism of inverse monoids
$\rho_B: IB(B_n)\to I(B_n)$ such that the  diagram (\ref{eq:cod})
is commutative.
\end{Theorem}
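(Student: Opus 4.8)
The plan is to invoke the universal property of either of the two presentations of $IB(B_n)$ recorded in the preceding Theorem, so the heart of the matter is to check that the partial maps defined by (\ref{eq:sigmai})--(\ref{eq:epsiloni}) respect all the defining relations. First I would observe that each generator is actually sent into $I(B_n)$: the formulas describe sign-equivariant partial bijections of $SN$ whose domains are stable under $x\mapsto -x$ (the full set $SN$ for $\sigma_i$ and $\tau$, and the symmetric set obtained by deleting $\pm v_1$, resp.\ $\pm v_i$, for $\epsilon$, resp.\ $\epsilon_i$); moreover this class of maps is closed under the composition of $I(B_n)$, because $x\in\operatorname{dom}(gh)$ iff $x\in\operatorname{dom} g$ and $(x)g\in\operatorname{dom} h$, a pair of conditions invariant under $x\mapsto -x$ by sign-equivariance. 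Hence arbitrary words in the generators are evaluated inside $I(B_n)$.

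Next I would verify the relations, using the presentation made of (\ref{eq:typbrel}) together with (\ref{eq:IBB}) (the argument based on (\ref{eq:invbrelations2}) and the $\epsilon_i$ is identical, using (\ref{eq:epsiloni})); this is the ``direct checking'' mentioned in the text preceding the theorem. The relations that live inside the group of units --- the braid relations of (\ref{eq:typbrel}) and $\tau\tau^{-1}=\tau^{-1}\tau=1$, $\sigma_i\sigma_i^{-1}=\sigma_i^{-1}\sigma_i=1$ --- hold because the images of $\sigma_i$ and $\tau$ are exactly the standard Coxeter generators of $W(B_n)$ in its action on $SN$, where these relations are classical; in particular each $\sigma_i$ and $\tau$ is sent to an involution, so $\sigma_i^{-1}$ and $\tau^{-1}$ have the same image. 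The mixed relations are checked by comparing the domain and the values of the two sides: $\epsilon\sigma_i=\sigma_i\epsilon$ for $i\ge 2$ since $\sigma_i$ is then total and fixes $\pm v_1$; $\epsilon=\epsilon^2$ as $\epsilon$ is a partial identity; $\epsilon=\epsilon\sigma_1^2=\sigma_1^2\epsilon$ since $\sigma_1^2$ is the identity bijection of $SN$; $\epsilon\tau=\tau\epsilon=\epsilon$ since $\tau$ is total, fixes every $\delta_j v_j$ with $j\ne 1$, and its inverse preserves $\operatorname{dom}\epsilon$; and finally, for the three-term relation, one computes that each of $\epsilon\sigma_1\epsilon$, $\sigma_1\epsilon\sigma_1\epsilon$ and $\epsilon\sigma_1\epsilon\sigma_1$ evaluates to the partial identity on the symmetric set $\{\delta_j v_j: j\ge 3\}$. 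I expect this last computation --- and more generally the bookkeeping of how the domain of a composite shrinks, once a convention for the product of two partial maps is fixed --- to be the only place where care is needed; everything else is immediate. By the universal property this yields a well-defined monoid homomorphism $\rho_B: IB(B_n)\to I(B_n)$, and since any monoid homomorphism between inverse monoids automatically preserves the unary inverse operation (if $b$ is the inverse of $a$ then $\rho_B(b)$ satisfies the two inverse equations for $\rho_B(a)$, hence equals $\rho_B(a)^{-1}$ by uniqueness of inverses in $I(B_n)$), $\rho_B$ is a homomorphism of inverse monoids.

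It remains to check that (\ref{eq:cod}) commutes. Both vertical arrows are the inclusions of the groups of units, so the two composites $Br(B_n)\to I(B_n)$ are monoid homomorphisms and it suffices to compare them on the generators $\sigma_1,\dots,\sigma_{n-1},\tau$ of $Br(B_n)$. The top-then-right composite sends $\sigma_i$, resp.\ $\tau$, to the image of the canonical map $Br(B_n)\to W(B_n)$, that is, to the permutation of $SN$ swapping $\delta v_i$ with $\delta v_{i+1}$ for each sign $\delta$, resp.\ to the sign change $\delta v_1\mapsto -\delta v_1$, viewed as elements of $I(B_n)$; the down-then-right composite sends them to $\rho_B(\sigma_i)$, resp.\ $\rho_B(\tau)$, which by (\ref{eq:sigmai}) and the displayed formula for $\tau$ are precisely those same signed permutations. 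Hence the two composites agree on a generating set of $Br(B_n)$, the square commutes, and the proof is complete.
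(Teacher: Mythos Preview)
Your proposal is correct and follows exactly the approach the paper has in mind: the paper's own ``proof'' consists solely of the remark preceding the statement that ``direct checking shows that the relations of the inverse braid monoid of type $B$ are satisfied'' together with a bare $\square$, and you have simply supplied that direct checking in full, together with the routine observations about the universal property, preservation of inverses, and commutativity on generators. Your computations (in particular that $\epsilon\sigma_1\epsilon=\sigma_1\epsilon\sigma_1\epsilon=\epsilon\sigma_1\epsilon\sigma_1$ all evaluate to the partial identity on $\{\delta_j v_j: j\ge 3\}$) are accurate under the right-action convention the paper uses.
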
 \hfill $\square$
\begin{Theorem} The homomorphism 
$\rho_B: IB(B_n)\to I(B_n)$
is an epimorphism.
We get a presentation of the monoid $ I(B_n)$ if in
a presentation of ${IB}(B_n)$ 
 we replace the first relation in (\ref{eq:invbrelations})
 by the following set of relations
\begin{equation*}
\sigma_i^2 =1, \ \text {for \ all} \ i, \\
% \label{eq:syminvrelations}
\end{equation*}
and delete the superfluous relations 
$$\epsilon =  \epsilon \sigma_1^2= \sigma_1^2 \epsilon, $$
and 
we replace the first relation in (\ref{eq:IBB})
 by the following relation
\begin{equation*}
\tau^2 =1.  \\
 %\label{eq:syminvrelationsB}
\end{equation*}
\end{Theorem}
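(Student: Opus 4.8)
The plan is to prove the two assertions of the statement in turn, following the template of the known case $\rho_n\colon IB_n\to I_n$. First the epimorphism claim. Since $I(B_n)$ is factorisable, $I(B_n)=E(I(B_n))\cdot W(B_n)$, where $E(I(B_n))$ is the semilattice of partial identities $\operatorname{id}_{\pm D}$ on $\pm D$ for $D\subseteq\{v_1,\dots,v_n\}$. The diagram (\ref{eq:cod}) says that the restriction of $\rho_B$ to the group of units $Br(B_n)$ is the canonical surjection onto $W(B_n)$, so $W(B_n)\subseteq\operatorname{im}\rho_B$. Formulas (\ref{eq:epsiloni}) give $\rho_B(\epsilon_i)=\operatorname{id}_{\pm(\{v_1,\dots,v_n\}\setminus\{v_i\})}$, whence $\rho_B(\epsilon_{i_1}\cdots\epsilon_{i_k})=\operatorname{id}_{\pm(\{v_1,\dots,v_n\}\setminus\{v_{i_1},\dots,v_{i_k}\})}$; running over complements yields every idempotent of $I(B_n)$, so $\rho_B$ is onto.

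Next, let $\mathcal I$ denote the monoid defined by the modified presentation (the presentation (\ref{eq:IBB}) of $IB(B_n)$ with $\sigma_i\sigma_i^{-1}=1$ replaced by $\sigma_i^2=1$, with $\tau\tau^{-1}=1$ replaced by $\tau^2=1$, and with the now redundant relations deleted). All defining relations of $\mathcal I$ hold in $I(B_n)$ under $\sigma_i,\tau,\epsilon_i\mapsto\rho_B(\sigma_i),\rho_B(\tau),\rho_B(\epsilon_i)$ — the braid, type-$B$ and $\epsilon$-relations because $\rho_B$ is a homomorphism, and $\sigma_i^2=1$, $\tau^2=1$ because these images are involutions — so there is a homomorphism $\pi\colon\mathcal I\to I(B_n)$, which is onto by the first part. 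It remains to prove $\pi$ injective, and for that it is enough to show $|\mathcal I|\le|I(B_n)|$. I will use that $I(B_n)$ has $\sum_{k=0}^{n}\binom{n}{k}^{2}2^{k}k!$ elements — a partial signed permutation is determined by its domain $\pm D$ and by the left coset of $W(B_n)$ modulo the pointwise stabiliser of $\pm D$ — and, writing $S$ for the complement of $D$, this equals $\sum_{S\subseteq\{1,\dots,n\}}2^{n}n!/(2^{|S|}|S|!)$.

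Now $\mathcal I$ is a quotient of the factorisable inverse monoid $IB(B_n)$; a quotient of an inverse monoid is inverse, idempotents map onto idempotents, and since $IB(B_n)=E\cdot Br(B_n)$ the image is $\mathcal I=\bar E\cdot\overline{Br(B_n)}$, again factorisable with group of units $G:=\overline{Br(B_n)}$ (an idempotent unit is $1$) and idempotent semilattice $\bar E$. Two points: (i) $G\cong W(B_n)$, because the relations make $\bar\sigma_i,\bar\tau$ satisfy the Coxeter relations of $W(B_n)$, giving a surjection $W(B_n)\to G$ whose composite with $\pi|_G\colon G\to W(B_n)$ is the identity of $W(B_n)$; (ii) $\bar E$ is a quotient of the Boolean semilattice $E(IB(B_n))\cong 2^{\{1,\dots,n\}}$ and $\pi$ maps it onto $E(I(B_n))$, so $\pi|_{\bar E}$ is a bijection and $\bar E=\{\,e_S:=\prod_{i\in S}\bar\epsilon_i\ \mid\ S\subseteq\{1,\dots,n\}\,\}$. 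For a factorisable inverse monoid one has $|\mathcal I|=\sum_{e\in\bar E}[G:H_e]$ with $H_e=\{g\in G:eg=e\}$ a subgroup, so it suffices to exhibit, for each $S$, a subgroup of $H_{e_S}$ of order $\ge 2^{|S|}|S|!$. Conjugating by a braid word realising a permutation carrying $\{1,\dots,k\}$ to $S$ (legitimate because $\sigma_j^{-1}\bar\epsilon_j\sigma_j=\bar\epsilon_{j+1}$ and $\bar\epsilon_1\bar\tau=\bar\epsilon_1$ hold), one reduces to $S=\{1,\dots,k\}$; there the relations valid in $IB(B_n)$ — in particular $\bar\epsilon_j\bar\epsilon_{j+1}\sigma_j=\bar\epsilon_j\bar\epsilon_{j+1}$, $\bar\epsilon_i\sigma_j=\sigma_j\bar\epsilon_i$ for $i\ne j,j+1$, pairwise commutativity of the $\bar\epsilon_i$, and $\bar\epsilon_1\bar\tau=\bar\epsilon_1$ — give $e_{\{1,\dots,k\}}g=e_{\{1,\dots,k\}}$ for every $g$ in the standard $B_k$-parabolic of $G$, a subgroup of order $2^{k}k!$. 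Hence $[G:H_{e_S}]\le 2^{n}n!/(2^{|S|}|S|!)$ and $|\mathcal I|\le\sum_{S}2^{n}n!/(2^{|S|}|S|!)=|I(B_n)|$, so $\pi$ is an isomorphism and the stated presentation is valid.

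The main obstacle is this last step — verifying $e_{\{1,\dots,k\}}g=e_{\{1,\dots,k\}}$ for all $g$ in the appropriate parabolic, and hence the bound on $|H_{e_S}|$ — which requires careful bookkeeping with the defining relations, including first recording that the $\bar\epsilon_i$ pairwise commute and that transpositions conjugate them as expected; both of these already hold in $IB(B_n)$ and so are available in $\mathcal I$. Everything else is formal once one has the structure of factorisable inverse monoids and the cardinality of $I(B_n)$.
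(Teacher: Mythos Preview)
Your proof is correct. Both you and the paper follow the same overall strategy --- establish that $\rho_B$ is surjective via factorisability of $I(B_n)$, then show the candidate quotient monoid has at most $|I(B_n)|$ elements --- but the cardinality bound is obtained by genuinely different decompositions. The paper imports the Easdown--Lavers normal form: every element of $IB(B_n)$, hence of the quotient, can be written as
\[
\sigma_{i_1}\cdots\sigma_1\cdots\sigma_{i_k}\cdots\sigma_k\,\epsilon_{k+1,n}\,x\,\epsilon_{k+1,n}\,\sigma_k\cdots\sigma_{j_k}\cdots\sigma_1\cdots\sigma_{j_1}
\]
with $x\in Br(B_k)$; in the quotient the middle factor collapses to $W(B_k)$, and counting the outer factors and $x$ gives $\sum_k\binom{n}{k}^{2}2^{k}k!$ directly. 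You instead exploit the identity $|\mathcal I|=\sum_{e\in E(\mathcal I)}[G:H_e]$ valid in any finite factorisable inverse monoid, identify $G\cong W(B_n)$ and $E(\mathcal I)\cong 2^{\{1,\dots,n\}}$ via the two-sided squeeze, and bound each index by exhibiting the parabolic $W(B_{|S|})$ inside $H_{e_S}$ (your use of the commutativity of idempotents to place $\bar\epsilon_1$, respectively $\bar\epsilon_j\bar\epsilon_{j+1}$, at the right end before absorbing $\bar\tau$ or $\sigma_j$ is exactly what makes this go through without needing $\tau\epsilon_j=\epsilon_j\tau$ for $j\ge 2$). The paper's route is shorter once one has the normal form from \cite{EL}; yours is more self-contained, avoids citing that normal form, and makes transparent that the count is really the orbit decomposition of the $G$-action on the idempotent semilattice --- at the cost of a little extra inverse-semigroup bookkeeping.
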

\begin{proof} Let us temporarily denote by ${IB}_n$  the 
monoid with the presentation given in the statement of Theorem. 
To see that the homomorphism $\rho_B$ is an epimorphism we use 
the fact that
the monoid ${I}(B_n)$ is factorisable, so its every
element can be written in the form $\epsilon g$ where $\epsilon$
belong to the set of idempotents and $g$ is an element of the 
the Weyl group of type $B$, $W(B_n)$. For the Weyl group
the map $\rho_B$ is an epimorphism $W(B_k)= Br(B_k)/P(B_k)$
and the sets of idempotents for the monoids ${IB}(B_n)$
and ${I}(B_n)$ coincide and the map $\rho_B$ restricted
to $E({IB}(B_n))$ is identity.

It follows from the definition of the action that $\tau^2$ and 
$\sigma_i^2$ are mapped to the unit by the map $\rho_B$.
So the homomorphism $\rho_B$ is factorised by the homomorphism
$\tilde{\rho}_B : IB_n \to I(B_n)$: 
\begin{equation*}
\rho_B: IB(B_n)\to {IB}_n \to I(B_n).
\end{equation*}
To show that $\tilde{\rho}_B$ is an isomorphism we compare the
cardinalities of $IB_n$ and $I(B_n)$. 
It is easy to calculate that the cardinality of 
$ I(B_n)$ is equal to 
${ \sum_{k=0}^{n}2^k\left(\begin{array}{c}n\\k\end{array}\right)^2
k!}$.

Let $\epsilon_{k+1, n}$ denote the partial braid with the trivial first $k$ strings 
and absent of the rest $n-k$ strings. It can be expressed using the generator $\epsilon$ 
or the generators $\epsilon_i$ as follows
\begin{equation*} 
\epsilon_{k+1, n}=
\epsilon\sigma_{n-1}\dots\sigma_{k+1}\epsilon \sigma_{n-1}\dots\sigma_{k+2}
\epsilon\dots \epsilon\sigma_{n-1}\sigma_{n-2}\epsilon \sigma_{n-1}\epsilon,
 %\label{eq:form_inv}
\end{equation*}
\begin{equation*} \epsilon_{k+1, n}=
\epsilon_{k+1}\epsilon_{k+2} \dots \epsilon_{n}.
 %\label{eq:espki}
\end{equation*}
It was proved in \cite{EL} that
every partial braid has a representative of the form
\begin{equation*} 
\sigma_{i_1}\dots\sigma_{1}\dots \sigma_{i_k}\dots\sigma_{k}
\epsilon_{k+1, n}x \epsilon_{k+1, n}\sigma_{k}\dots\sigma_{j_k}\dots\sigma_{1}
\dots\sigma_{j_1},
 \\ 
%\label{eq:form_inv}
\end{equation*}
\begin{equation*} k\in \{0,\dots, n\}, 
0\leq i_1<\dots<i_k\leq n-1  \ 
\text{and} \  0\leq j_1<\dots<j_k\leq n-1,
\end{equation*}
where $x\in Br_k$. 
The same is true  for ${IB}(B_n)$, where $x\in Br(B_n)$.
The elements $\tau^2$ and $\sigma_i^2$ are mapped to $1$ by $\rho_B$,
so each equivalence class  modulo pure braid group of the type $B_k$
is mapped to the same element in ${I}(B_n)$. These equivalence
classes form 
the Weyl group $W(B_k)$.
The order of the Weyl group of type $B_k$ is equal to
${ 2^k k!}$. 
We see that the set
of cardinality less or equal than 
${ \sum_{k=0}^{n}2^k\left(\begin{array}{c}n\\k\end{array}\right)^2
k!}$ is mapped epimorphically onto the set of exactly this cardinality.
It means that the epimorphism $\tilde{\rho}_B$ is an isomorphism.
\end{proof}

Let $\mathcal E$ be the monoid generated by one idempotent generator 
$\epsilon$. 
\begin{Proposition} The abelianization $Ab(IBB_n)$ of the monoid
 $IBB_n$ is isomorphic to
$\mathcal E \oplus \Z^2$, factorised by the relations
\begin{equation*}
\begin{cases}
\epsilon + \tau =\epsilon,\\
\epsilon + \sigma =\epsilon, \\
\end{cases}
\end{equation*}
where $\tau$ and $\sigma$ are generators of $\Z^2$. 
 The canonical map
\begin{equation*}
a: IBB_n \to Ab(IBB_n)
\end{equation*}
is given by the formulas:
\begin{equation*}
\begin{cases}
a(\epsilon_i) = \epsilon ,\\
a(\tau) = \tau ,\\
a(\sigma_i) = \sigma .
\end{cases}
\end{equation*}
The canonical map from $Ab(IBB_n)$ to $Ab(I(B_n))$ consists of 
factorising $\Z^2$ modulo $2$.
\end{Proposition}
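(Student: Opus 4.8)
The plan is to compute $Ab(IBB_n)$ directly from one of the monoid presentations of $IB(B_n)$ recorded in the Theorem above, using the standard fact that the abelianization of a monoid given by generators and relations is the commutative monoid with the same generators, the same defining relations, and all generators forced to commute. I would use the presentation built on $\sigma_i,\sigma_i^{-1}$ ($1\le i\le n-1$), the $\epsilon_i$ ($1\le i\le n$), and $\tau,\tau^{-1}$, i.e.\ relations \eqref{eq:brelations}, \eqref{eq:typbrel}, \eqref{eq:invbrelations2}, together with $\tau\tau^{-1}=\tau^{-1}\tau=1$ and $\epsilon_1\tau=\tau\epsilon_1=\epsilon_1$, and I would write the abelianized monoid additively.

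First I would run through the relations to identify the generators of $Ab(IBB_n)$. The braid relation $\sigma_i\sigma_{i+1}\sigma_i=\sigma_{i+1}\sigma_i\sigma_{i+1}$ abelianizes to $2\sigma_i+\sigma_{i+1}=\sigma_i+2\sigma_{i+1}$, hence $\sigma_i=\sigma_{i+1}$, so all the $\sigma_i$ collapse to one invertible class $\sigma$; the relation $\epsilon_i\sigma_i=\sigma_i\epsilon_{i+1}$ abelianizes to $\epsilon_i=\epsilon_{i+1}$, so all the $\epsilon_i$ collapse to one class $\epsilon$, which is idempotent by $\epsilon_i=\epsilon_i^2$; and $\tau$ gives an invertible class $\tau$. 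Next I would check which relations survive: all the commutation-type relations, the braid relations with $|i-j|>1$, and the type-$B$ relation $\tau\sigma_1\tau\sigma_1=\sigma_1\tau\sigma_1\tau$ become trivial (both sides equal $2\sigma+2\tau$), so in particular nothing relates $\sigma$ with $\tau$; the relation $\sigma_i^2\epsilon_{i+1}=\epsilon_{i+1}$ gives $2\sigma+\epsilon=\epsilon$; the balanced relation $\epsilon_i\epsilon_{i+1}\sigma_i=\epsilon_i\epsilon_{i+1}$ together with $2\epsilon=\epsilon$ gives $\epsilon+\sigma=\epsilon$ (which subsumes the previous one); and $\epsilon_1\tau=\epsilon_1$ gives $\epsilon+\tau=\epsilon$. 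This shows that $Ab(IBB_n)$ is the commutative monoid on an idempotent $\epsilon$ and two free invertible generators $\sigma,\tau$ subject only to $\epsilon+\sigma=\epsilon$ and $\epsilon+\tau=\epsilon$; since no relation imposes an identity between $\sigma$ and $\tau$, the submonoid they generate is free abelian of rank $2$, and $\epsilon$ generates a copy of $\mathcal E$, so $Ab(IBB_n)\cong(\mathcal E\oplus\Z^2)$ factored by those two relations, with $\sigma,\tau$ the generators of $\Z^2$. The formula for the canonical map $a$ is then immediate from the construction of the abelianization, since $a$ carries each generator to its class: $a(\epsilon_i)=\epsilon$, $a(\tau)=\tau$, $a(\sigma_i)=\sigma$.

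For the last sentence I would invoke the earlier Theorem: a presentation of $I(B_n)$ is obtained from the presentation of $IB(B_n)$ by replacing $\sigma_i\sigma_i^{-1}=\sigma_i^{-1}\sigma_i=1$ by $\sigma_i^2=1$ and $\tau\tau^{-1}=\tau^{-1}\tau=1$ by $\tau^2=1$ (and deleting superfluous relations). Abelianizing, the only new relations are $2\sigma=0$ and $2\tau=0$, so the same computation yields $Ab(I(B_n))\cong(\mathcal E\oplus(\Z/2)^2)$ factored by the same two relations $\epsilon+\sigma=\epsilon$, $\epsilon+\tau=\epsilon$, and the map $Ab(IBB_n)\to Ab(I(B_n))$ induced by $\rho_B$ sends $\epsilon\mapsto\epsilon$, $\sigma\mapsto\sigma$, $\tau\mapsto\tau$, hence is the identity on the $\mathcal E$-summand and reduction modulo $2$ on the $\Z^2$-summand, as claimed.

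I do not expect a genuine obstacle: once the right presentation is used the computation is routine. The only points requiring care are (i) verifying that no surviving relation secretly identifies $\sigma$ with $\tau$ or with a multiple of $\epsilon$, so that $\epsilon+\sigma=\epsilon$ and $\epsilon+\tau=\epsilon$ are truly all that remains, and (ii) noting the slightly non-obvious source of $\epsilon+\sigma=\epsilon$, namely the balanced relation $\epsilon_i\epsilon_{i+1}\sigma_i=\epsilon_i\epsilon_{i+1}$ (or, with the presentation \eqref{eq:IBB}, the relation $\epsilon\sigma_1\epsilon=\epsilon\sigma_1\epsilon\sigma_1$) rather than $\sigma_i^2\epsilon_{i+1}=\epsilon_{i+1}$, which only yields the weaker $2\sigma+\epsilon=\epsilon$.
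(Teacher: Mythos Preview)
Your computation is correct and is the natural way to prove this; the paper itself gives no argument at all (the Proposition is stated and immediately followed by~$\square$), so there is nothing to compare against. Your only slightly informal step is the claim that ``no relation imposes an identity between $\sigma$ and $\tau$, so the submonoid they generate is free abelian of rank~$2$''; to make this airtight you can simply observe that after your reductions the remaining commutative-monoid presentation has generators $\epsilon,\sigma,\sigma^{-1},\tau,\tau^{-1}$ and relations $\epsilon=2\epsilon$, $\epsilon+\sigma=\epsilon$, $\epsilon+\tau=\epsilon$ (plus invertibility of $\sigma,\tau$), which is literally a presentation of $(\mathcal E\oplus\Z^2)$ modulo the two stated relations, so no separate freeness argument is needed.
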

\hfill $\square$

\end{document}